\newtheorem{theorem}{Theorem}[section]
\newtheorem{lemma}[theorem]{Lemma}
\newtheorem{proposition}[theorem]{Proposition}
\newtheorem{corollary}[theorem]{Corollary}
\theoremstyle{definition}
\newtheorem{definition}[theorem]{Definition}
\theoremstyle{remark}
\newcommand{\B}{\mathcal{B}}
\newcommand{\rank}{\operatorname{rank}}
\let\depth\relax
\newcommand{\Le}{\raisebox{\depth}{\scalebox{-1}[-1]{\ensuremath{\Gamma}}}}
\newcommand{\sm}{\setminus}
\title{Sparse Paving Positroids}
\author{John Machacek \and George D. Nasr}
\address{Department of Mathematics and Computer Science, Hampden-Sydney College} \email{jmachacek@hsc.edu}
\address{Department of Mathematics, Augustana University} \email{george.nasr@augie.edu}
\begin{document}
\maketitle

\begin{abstract}
Using Postnikov's $\Le$-diagrams, decorated permutations, and Grassmann necklaces, we classify which positroids are sparse paving matroids. This allows us to enumerate sparse paving positroids, making connections to a known sequence involving the golden ratio and to the Lucas numbers.
\end{abstract}

\section{Introduction}
In this note, we answer the question: when is a positroid a sparse paving matroid?
In studying the totally nonnegative Grassmannian, Postnikov defined positroids~\cite{Pos} as real-representable matroids that admit a matrix representation whose maximal minors are all nonnegative.
These turn out to be equivalent to the base-sortable matroids of Blum~\cite{Blum}, as shown in~\cite[Corollary 9.5]{LamPos}.

Sparse paving matroids form a special subclass of paving matroids.
Paving matroids have long been believed to constitute a “large class’’ among all matroids~\cite{CrapoRota}.
It is conjectured that asymptotically almost all matroids are paving matroids, which would imply that asymptotically almost all matroids are sparse paving matroids~\cite[Conjecture 1.6]{paving}.
We characterize precisely when a positroid is a sparse paving matroid, and we express this characterization in terms of various combinatorial objects that parameterize positroids.

Throughout, positroids have an ordered ground set.
This is the usual convention when considering the Grassmannian and total positivity: the ordering corresponds to the order of columns in a matrix representing a linear subspace.
With this convention, many combinatorial objects are in bijection with positroids.
The objects we work with are Grassmann necklaces, decorated permutations, and $\Le$-diagrams, each defined by Postnikov~\cite{Pos}.
These objects can be used to parameterize cells in a decomposition of the totally nonnegative Grassmannian.

There is a growing body of work on matroid-theoretic aspects of positroids.
This includes the work of Park~\cite{Park} on finding which paving matroids are excluded minors for a subclass of positroids known as interval positroids.
Some of these works consider positroids on unordered ground sets~\cite{BoninPos}, and others explicitly distinguish between ordered and unordered ground sets~\cite{Quail}.
This amounts to considering positroids up to matroid isomorphism.
In our setting, there can be multiple distinct positroids that are isomorphic as matroids on an unordered ground set.
Whether a positroid is sparse paving depends only on the matroid isomorphism class.
However, we work with an ordered ground set to develop methods for testing whether a positroid is sparse paving that can be applied directly to any cell in the totally nonnegative Grassmannian.
Thus, our enumeration results also count positroids on ordered ground sets.

In Section~\ref{sec:prelim}, we provide background on matroid theory, including definitions of paving and sparse paving matroids.
In Sections~\ref{sec:necklace}--\ref{sec:Le}, we state and prove our characterization of sparse paving positroids.
Each of these sections defines and focuses on a different combinatorial model for positroids.
Finally, in Section~\ref{sec:enum}, we count the number of sparse paving positroids.

\section{Preliminaries of matroids}
\label{sec:prelim}

We provide a brief review of matroids and relevant terminology. 
For more details about matroid theory, one may see~\cite{Oxley} and~\cite{Welsh}.
We begin by defining a matroid.
Let $E$ be a finite set, referred to as the \emph{ground set}.
A \emph{matroid basis system on ground set $E$} is a nonempty family $\B$ of subsets of $E$ satisfying the \emph{exchange axiom}: for all distinct $B,B'\in\B$ and all $e\in B\sm B'$, there exists $e'\in B'\sm B$ such that $\left(B\sm \{e\}\right)\cup\{e'\}\in \B$.
Elements of $\B$ are called \emph{bases}.
The pair $M=(E,\B)$ defines a \textit{matroid}.
Any subset of a basis is called an \textit{independent set}, and a \emph{dependent set} is a subset of $E$ which is not independent.
The \emph{rank function} of $M$ is defined by $\rank(A)=\max\{|A\cap B|:\ B\in\B\}$ for $A\subseteq E$.
The number $\rank(E)$ is called the \textit{rank} of the matroid $M$ and can also be denoted $\rank(M)$.

Some additional matroid terminology that will be useful:
\begin{itemize}
\item A \textit{circuit} in a matroid is a minimal dependent subset of $E$. A \textit{loop} is a circuit of size~1.
\item A \textit{flat} of $M$ is a subset $F\subseteq E$ such that $\rank(G)>\rank(F)$ for every $G\supsetneq F$.
\item A \textit{hyperplane} is a flat of rank $\rank(M)-1$. 
\item The \textit{dual} of a matroid $M=(E,\B)$ is the matroid $M^*:=(E,\B^*)$ where 
\[\B^*:=\{E\setminus B: B\in \B\}.\]
\end{itemize}

In general, proving a collection of sets is a matroid base system is not a trivial task, though some examples require less work than others. 
For example, for a nonnegative integer $k$ and ground set $E$, the \textit{uniform matroid} $U_{k}(E)$ has basis system $\binom{E}{k} := \{B\subseteq [n]:\ |B|=k\}$.
We leave it to the reader to verify that this collection of sets indeed satisfies the exchange axiom.
We will often find it convenient to write $U_{k,n}:=U_k([n])$, where $[n]:=\{1,2,\dots, n\}$.

We will be especially interested in a class of matroids called \emph{sparse paving matroids}.
A matroid of rank~$k$ is called \emph{paving} if every circuit has cardinality greater than or equal to~$k$, and it is \emph{sparse paving} if it and its dual are both paving.
It is conjectured and widely believed that asymptotically all matroids are paving matroids---or even sparse paving matroids---see \cite{paving}, \cite{sparsePaving}, \cite[Chapter~15.5]{Oxley}.

There are several alternative ways of defining sparse paving matroids. Given a rank $k$ matroid $M=(E,\mathcal{B})$, let $\mathcal{CH}={E\choose k}\setminus \mathcal{B}$. Then $M$ is sparse paving if any (and hence all) of the following hold: 
\begin{enumerate}
    \item $\mathcal{CH}$ is the set of circuit-hyperplanes---that is, sets which are both circuits and hyperplanes---for $M$.
    \item Given $C,C'\in \mathcal{CH}$, $|C\triangle C'|\geq 4$, where $\triangle$ is the \textit{symmetric difference} operation.
\end{enumerate}
Circuit-hyperplanes are involved in an important operation in matroid theory.
\begin{lemma}{\cite[Prop.1.5.14]{Oxley}}
    Let $C$ be a \textit{circuit-hyperplane}, that is, a set that is both a circuit and a hyperplane.  If $C$ is a circuit-hyperplane, then the family $\B\cup\{C\}$ is in fact a matroid basis system, called the \textit{relaxation of $M$ at $C$}. 
\end{lemma}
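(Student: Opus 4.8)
The plan is to check the basis exchange axiom directly for $\B' := \B \cup \{C\}$. First I would record the numerology: since $C$ is a circuit it is dependent with $\rank(C) = |C| - 1$, and since $C$ is a hyperplane, $\rank(C) = \rank(M) - 1 = k-1$; hence $|C| = k$, so $C \in \binom{E}{k} \setminus \B$ (it is not a basis, as bases are independent). Two auxiliary observations will then do all the work. First, for any $x \in C$ the set $C \setminus \{x\}$ is independent of size $k-1$ and spans the same flat as $C$, namely $C$ itself --- because $C$ is a circuit deleting one element does not change the span, and $C$ is a flat since it is a hyperplane --- so $(C \setminus \{x\}) \cup \{f\}$ has rank $k$ and size $k$, i.e.\ is a basis of $M$, for every $f \notin C$. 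Second, no basis of $M$ lies inside $C$, since $\rank(C) = k-1 < k$.

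Now take distinct $B_1, B_2 \in \B'$ and $e \in B_1 \setminus B_2$. If $B_1, B_2 \in \B$ we are done by the exchange axiom for $\B$, so I would assume $C \in \{B_1, B_2\}$. Suppose $B_1 = C$; then $B_2 \in \B$, and the second observation lets me pick $e' \in B_2 \setminus C = B_2 \setminus B_1$. Since $e \in C$ and $e' \notin C$, the first observation shows $(B_1 \setminus \{e\}) \cup \{e'\} = (C \setminus \{e\}) \cup \{e'\}$ is a basis of $M$, hence lies in $\B'$, as required.

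The remaining case $B_2 = C$ is the only subtle one. Here $e \in B_1 \setminus C$ with $B_1 \in \B$; put $I := B_1 \setminus \{e\}$, an independent set of size $k-1$. If $I \subseteq C$, then $C \setminus I$ is a single element $e'$, which lies in $C \setminus B_1 = B_2 \setminus B_1$ (it is not $e$, since $e \notin C$, and not in $B_1$, since it is not in $I$), while $(B_1 \setminus \{e\}) \cup \{e'\} = I \cup \{e'\} = C = B_2 \in \B'$. If instead $I \not\subseteq C$, then some element of $I$ lies outside the flat $C$, so $\rank(I \cup C) = k$, and $I$ extends to a basis of $M$ by an element $e' \in (I \cup C) \setminus I = C \setminus I$; again $e' \in B_2 \setminus B_1$ and $(B_1 \setminus \{e\}) \cup \{e'\} = I \cup \{e'\} \in \B'$.

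I expect the main obstacle to be exactly the bifurcation in this last case: one must notice that when $I \subseteq C$ the exchanged set is \emph{forced} to equal $C$ rather than a basis of $M$, which is precisely the reason $C$ had to be adjoined. Everything else is bookkeeping, the one recurring point being that each $e'$ really lies in $B_2 \setminus B_1$; each time this follows from $e \notin C$ together with $e'$ having been chosen outside $I$.
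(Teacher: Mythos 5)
Your proof is correct. Note that the paper itself offers no argument for this lemma---it is quoted with a citation to Oxley's Proposition 1.5.14---so there is no in-paper proof to compare against; your direct verification of the exchange axiom is essentially the standard textbook argument. The bookkeeping checks out: the numerology $|C|=k$ with $\rank(C)=k-1$, the fact that $\mathrm{cl}(C\setminus\{x\})=C$ for any $x\in C$ (using that $C$ is a circuit for $\supseteq$ and a flat for $\subseteq$), and the observation that no basis lies inside $C$ together handle the case $B_1=C$; and in the case $B_2=C$ you correctly isolate the only delicate point, namely that when $B_1\setminus\{e\}\subseteq C$ the exchange is forced to produce $C$ itself, which is exactly why $C$ must be adjoined to $\B$. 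The augmentation step in the sub-case $B_1\setminus\{e\}\not\subseteq C$ (extending the independent set of size $k-1$ inside $I\cup C$, which has rank $k$ because $C$ is a flat of rank $k-1$) is standard and stated accurately, and in every branch you verify that the chosen $e'$ lies in $B_2\setminus B_1$. No gaps.
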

In particular, this leads to one more alternative definition of sparse paving matroids in addition to the two given above: 
\begin{enumerate}
    \item[(3)] After relaxing all circuit-hyperplanes for $M$, the matroid yields a uniform matroid. 
\end{enumerate}

Recently, the notion of relaxation was generalized from circuit-hyperplanes to something called ``stressed-subsets" in \cite{stress_sub}. This operation allows one to define a class of matroids called \textit{elementary split matroids} in an analogous way to how the original relaxation operation corresponds to sparse paving matroids. Elementary split matroids---which contain the class of sparse paving matroids---are outside the scope of this paper, so we include this here just to provide an updated account of the relevant theory.

\section{Grassmann Necklaces of Sparse Paving Positroids}
\label{sec:necklace}
Take a positive integer $n$ and $t \in [n]$.
We define an ordering denoted $<_t$ on $[n]$ by
\[t <_t t+1 <_t \cdots <_t n <_t 1 <_t 2 <_t \cdots <_t t-1\]

For any $k$ and any two $k$-element subsets $I = \{a_1 <_t a_2 <_t \cdots <_t a_k\}$ and $J = \{b_1 <_t b_2 <_t \cdots <_t b_k\}$ of $[n]$ we say that $I \leq_t J$ if $a_i \leq_t b_i$ for all $1 \leq i \leq k$.

\begin{definition}[{\cite[Definition 16.1]{Pos}}]
    A \emph{Grassmann necklace} is a sequence $(I_1, I_2, \cdots, I_n)$ of subsets of $[n]$ so that:
    \begin{itemize}
        \item if $i \in I_i$, then $I_{i+1} = (I_i \setminus \{i\}) \cup \{j\}$ for some $j \in [n]$,
        \item and if $i \not \in I_i$, then $I_{i+1} = I_i$.
    \end{itemize}
    Here, all indices are considered modulo $n$.
\end{definition}

For any $k$-element subset $I \in \binom{[n]}{k}$ and $t \in [n]$ we define the \emph{cyclically shifted Schubert matroid} $SM^t_I$ as the matroid with bases
\[\left\{J \in \binom{[n]}{k} : I \leq_t J\right\}.\]
We now recall a result of Oh~\cite[Theorem 8]{Oh} that allows us to define positroids as intersections of shifted Schubert matroids.
Given a Grassmann necklace $(I_1, I_2, \cdots, I_n)$, the positroid corresponding to it is
\[\bigcap_{t=1}^n SM^t_{I_t}\]
where the intersection is taken as a set of bases.

For any $1 \leq k \leq n$ and $i \in [n]$, let us define the subset $C_{k,n}^{(i)} \in \binom{[n]}{k}$ to be $C_{k,n}^{(i)} = \{i, i+1, \dots i + k -1\}$ where everything is computed modulo $n$ using $[n]$ as the set of representatives.
So, $C_{k,n}^{(i)}$ is simply the cyclic interval in $[n]$ starting at $i$ with $|C_{k,n}^{(i)}| = k$.
These subsets play a key role in our classification of Grassmann necklaces for sparse paving positroids since $C_{k,n}^{(i)}$ is the minimal element in $\binom{[n]}{k}$ for the order $<_i$.

\begin{theorem}
    The positroid on ground set $[n]$ of rank $2 \leq k \leq n-2$ corresponding to the Grassmann necklace $(I_1, I_2, \dots, I_n)$ is sparse paving if and only if for every $i \in [n]$, $I_i \neq C_{k,n}^{(i)}$ implies that $I_{i-1} = C_{k,n}^{(i-1)}$, $I_{i+1} = C_{k,n}^{(i+1)}$, and $I_i = (C_{k,n}^{i} \setminus \{i + k - 1\}) \cup \{i+k\}$.
    Moreover, in this case, its set of circuit hyperplanes consists precisely of the $C_{k,n}^{(i)}$ for which $I_i \neq C_{k,n}^{(i)}$.
    \label{thm:necklace}
\end{theorem}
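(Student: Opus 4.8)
The plan is to reduce everything to bookkeeping about which cyclic intervals $C^{(i)}_{k,n}$ are bases of the positroid $M=\bigcap_{t=1}^n SM^t_{I_t}$. Two facts will be used freely: $C^{(i)}_{k,n}$ is the $<_i$-minimal element of $\binom{[n]}{k}$ (as recalled above), and in the correspondence of Postnikov and Oh \cite{Pos,Oh} the set $I_i$ is the $<_i$-minimal basis of $M$; in particular each $I_i$ is a basis. Together these give $C^{(i)}_{k,n}\in\mathcal B$ if and only if $I_i=C^{(i)}_{k,n}$ (if $C^{(i)}_{k,n}$ is a basis then $I_i\le_i C^{(i)}_{k,n}\le_i I_i$; conversely $I_i$ is always a basis), so if we set $S:=\{i\in[n]:I_i\ne C^{(i)}_{k,n}\}$ then $S$ is exactly the set of indices for which $C^{(i)}_{k,n}$ fails to be a basis. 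I also record two routine coordinate computations, writing $D_i:=\bigl(C^{(i)}_{k,n}\setminus\{i+k-1\}\bigr)\cup\{i+k\}$: (A) every $k$-subset of $[n]$ other than $C^{(i)}_{k,n}$ is $\ge_i D_i$ (so $D_i$ is the unique cover of $C^{(i)}_{k,n}$ in the $<_i$-order), and $|C^{(i)}_{k,n}\triangle D_i|=2$; (B) for $2\le k\le n-2$ and $i\not\equiv j\pmod n$, one has $|C^{(i)}_{k,n}\triangle C^{(j)}_{k,n}|=2$ when $j\equiv i\pm1\pmod n$ and $|C^{(i)}_{k,n}\triangle C^{(j)}_{k,n}|\ge 4$ otherwise, where one reduces to the complementary intervals $C^{(\cdot)}_{n-k,n}$ when $k>n/2$ to cover small $n$. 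Finally, a lemma used in both directions: \emph{if $I_t=D_t$ for every $t\in S$, then the non-bases of $M$ are exactly $\{C^{(i)}_{k,n}:i\in S\}$}. Indeed, if $J$ is a non-basis then $I_t\not\le_t J$ for some $t$; since $C^{(t)}_{k,n}\le_t J$ always, we cannot have $t\notin S$, so $t\in S$ and $I_t=D_t$; writing $J$ in $<_t$-increasing order and comparing coordinatewise with $C^{(t)}_{k,n}$ (using $C^{(t)}_{k,n}\le_t J$) and with $D_t$ (using $I_t\not\le_t J$) forces every coordinate of $J$ to agree with $C^{(t)}_{k,n}$, i.e.\ $J=C^{(t)}_{k,n}$; conversely each $C^{(t)}_{k,n}$ with $t\in S$ lies $<_t$-strictly below the minimal basis $I_t$, hence is a non-basis, and these sets are distinct because $k\le n-1$.

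For the forward direction, assume $M$ is sparse paving and fix $i\in S$. Then $C^{(i)}_{k,n}$ is a non-basis; by (A) and $I_i\ne C^{(i)}_{k,n}$ we get $D_i\le_i I_i$, and if $D_i\ne I_i$ then $D_i<_i I_i$, so $D_i$ is also a non-basis, yet $|C^{(i)}_{k,n}\triangle D_i|=2$, contradicting characterization (2). Hence $I_i=D_i$, the last of the three asserted equalities. Next, if $i+1\in S$ then $C^{(i)}_{k,n}$ and $C^{(i+1)}_{k,n}$ are non-bases with symmetric difference of size $2$ by (B), again contradicting (2); so $i+1\notin S$, i.e.\ $I_{i+1}=C^{(i+1)}_{k,n}$, and symmetrically $I_{i-1}=C^{(i-1)}_{k,n}$. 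This is the claimed necklace condition. Since now $I_t=D_t$ for all $t\in S$, the lemma above identifies the non-bases of $M$ as $\{C^{(i)}_{k,n}:i\in S\}$, and in a sparse paving matroid the non-bases are exactly the circuit-hyperplanes (characterization (1)); this gives the ``moreover'' statement.

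For the converse, assume the necklace condition. In particular $I_t=D_t$ for all $t\in S$, so by the lemma the non-bases of $M$ are exactly $\{C^{(i)}_{k,n}:i\in S\}$, while $M$ is a rank-$k$ matroid (a nonempty positroid with all bases of size $k$). The condition also says $i\in S$ forces $i\pm1\notin S$, so $S$ contains no two cyclically adjacent indices; hence for distinct $i,j\in S$ we have $j\not\equiv i$ and $j\not\equiv i\pm1\pmod n$, and (B) gives $|C^{(i)}_{k,n}\triangle C^{(j)}_{k,n}|\ge 4$. By characterization (2), $M$ is sparse paving, and by characterization (1) its circuit-hyperplanes are precisely its non-bases $\{C^{(i)}_{k,n}:i\in S\}$.

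The main obstacle is the lemma's non-basis identification: extracting from the three relations ``$C^{(t)}_{k,n}\le_t J$'', ``$I_t\not\le_t J$'', and ``$I_t=D_t$'' that $J$ must be the whole cyclic interval $C^{(t)}_{k,n}$, rather than merely one of many $k$-subsets failing the defining inequality of $SM^t_{I_t}$. A secondary point is making (B) hold uniformly for all $2\le k\le n-2$ (passing to complements handles $k>n/2$, and it is exactly these bounds on $k$ that keep (B) non-degenerate, since for $k=1$ or $k=n-1$ all relevant symmetric differences have size $2$). Granting the two coordinate computations and the lemma, both implications and the circuit-hyperplane description follow formally from the distance characterization of sparse paving matroids.
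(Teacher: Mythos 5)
Your proposal is correct and follows essentially the same route as the paper: both directions hinge on $I_i$ being the $\leq_i$-minimal basis, on $C^{(i)}_{k,n}$ and $D_i$ being the two $\leq_i$-smallest $k$-subsets, and on the symmetric-difference characterization of sparse paving matroids, with your ``lemma'' simply spelling out in coordinates the paper's terser assertion that the only non-bases are the cyclic intervals $C^{(i)}_{k,n}$ with $I_i \neq C^{(i)}_{k,n}$. No gaps; it is the paper's argument with the Gale-order bookkeeping made explicit.
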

\begin{proof}
    Consider a sparse paving positroid of rank $k$ with Grassmann necklace $(I_1, I_2, \dots, I_n)$.
    Recall, this means that $I_i$ is the lexicographically least basis with the order $<_i$.
    If there exists an $i$ so that $I_{i} \neq C_{k,n}^{(i)}$ and $I_{i+1} \neq C_{k,n}^{(i+1)}$, then neither $C_{k,n}^{(i)}$ nor $C_{k,n}^{(i+1)}$ are bases.
    Such a positroid cannot be sparse paving because $|C_{k,n}^{(i)} \Delta C_{k,n}^{(i+1)}| = 2$.
    If there exists an $i$ so that $I_{i} \neq C_{k,n}^{(i)}$ and $I_i \neq C_{k,n}^{i} \setminus \{i + k -1\} \cup \{i+k\}$, then again the positroid is not sparse paving because these two non-bases have symmetric difference equal to $2$.
    Thus one direction of the theorem is proven.

     To prove the other direction take a Grassmann necklace $(I_1, I_2, \dots, I_n)$ satisfying the conditions of the theorem.
     The only $k$-subsets that are not bases are $C_{k,n}^{(i)}$ for $i \in [n]$ with $I_i \neq C_{k,n}^{(i)}$.
    This is because we know that our positroid is an intersection of shifted Schubert matroids, and with respect to $<_i$ the smallest $k$-subset is $C_{k,n}^{(i)}$ while the second smallest $k$-subset is $(C_{k,n}^{i} \setminus \{i + k -1\}) \cup \{i+k\}$.
    Since $|C_{k,n}^{(i)} \Delta C_{k,n}^{(j)}| \geq 4$ whenever $j \neq i \pm 1$ and $2 \leq k \leq n-2$, the positroid under consideration is sparse paving, and the theorem is proven.
    \end{proof}

    We say a set $A\subseteq[n]$ is \emph{non-adjacent} if whenever $i\in A$ we have $i-1\notin A$ and $i+1\notin A$, where this condition is considered modulo $n$. Thus, 
    Theorem~\ref{thm:necklace} tells us the following. 

    \begin{corollary}
Sparse paving positroids on the ground set $[n]$of rank $2 \leq k \leq n-2$ are in bijection with non-adjacent subsets of $[n]$.
\label{cor:nonadj}
    \end{corollary}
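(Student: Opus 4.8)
The plan is to upgrade Theorem~\ref{thm:necklace} to an explicit bijection, fixing the rank $k$ with $2 \le k \le n-2$ throughout. Recall that positroids on $[n]$ are in bijection with Grassmann necklaces. In one direction, to a sparse paving positroid with Grassmann necklace $(I_1,\dots,I_n)$ I associate the set $A := \{i \in [n] : I_i \neq C_{k,n}^{(i)}\}$. By Theorem~\ref{thm:necklace}, $i \in A$ forces $I_{i-1} = C_{k,n}^{(i-1)}$ and $I_{i+1} = C_{k,n}^{(i+1)}$, that is, $i-1 \notin A$ and $i+1 \notin A$, so $A$ is non-adjacent (and the ``moreover'' clause says that $A$ records the circuit-hyperplanes).

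In the other direction, given a non-adjacent $A \subseteq [n]$, I define a candidate sequence by $I_i := C_{k,n}^{(i)}$ for $i \notin A$ and $I_i := (C_{k,n}^{(i)} \setminus \{i+k-1\}) \cup \{i+k\}$ for $i \in A$. The substantive step is checking that $(I_1,\dots,I_n)$ is a genuine Grassmann necklace. Using $2 \le k \le n-2$, one first checks that each $I_i$ is a $k$-element subset and that $i \in I_i$ in both cases, so the only axiom to verify is: $I_{i+1} = (I_i \setminus \{i\}) \cup \{j\}$ for some $j \in [n]$. I would split into cases according to whether $i$ and $i+1$ lie in $A$: the case $i, i+1 \in A$ does not occur since $A$ is non-adjacent --- which is exactly what is needed, since otherwise $I_i$ and $I_{i+1}$ would fail to differ by a single exchange --- and in each of the remaining three cases a short computation with cyclic intervals produces the required $j$ (respectively $i+k$, $i+k+1$, and $i+k-1$). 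With the necklace in hand, Theorem~\ref{thm:necklace} immediately shows the associated positroid is sparse paving: if $I_i \neq C_{k,n}^{(i)}$ then $i \in A$, non-adjacency gives $i-1, i+1 \notin A$ hence $I_{i-1} = C_{k,n}^{(i-1)}$ and $I_{i+1} = C_{k,n}^{(i+1)}$, and $I_i$ has the prescribed form by construction.

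Finally I would verify the two assignments are mutually inverse, which is essentially formal. Starting from a non-adjacent $A$ and forming the sequence above, we have $I_i \neq C_{k,n}^{(i)}$ exactly for $i \in A$ (here $i+k \notin C_{k,n}^{(i)}$ because $k < n$), so applying the forward map returns $A$. Conversely, starting from a sparse paving positroid with necklace $(I_1,\dots,I_n)$ and associated set $A$, Theorem~\ref{thm:necklace} pins down each $I_i$ from whether or not $i \in A$, in exactly the manner the inverse assignment prescribes; since a positroid is determined by its Grassmann necklace, the composite is the identity on sparse paving positroids. (In particular, the empty set corresponds to the uniform matroid $U_{k,n}$.)

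I expect the only real obstacle to be the Grassmann-necklace verification: one must handle the arithmetic of the cyclic intervals modulo $n$ with some care --- confirming in particular that the $k$-subsets retain size $k$ and that shifted indices do not wrap around to coincide --- but this is routine bookkeeping rather than a conceptual hurdle, and non-adjacency enters exactly where it is needed.
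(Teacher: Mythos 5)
Your proposal is correct and follows essentially the same route as the paper, which simply sends a sparse paving positroid's Grassmann necklace $(I_1,\dots,I_n)$ to $\{i \in [n] : I_i \neq C_{k,n}^{(i)}\}$ and lets Theorem~\ref{thm:necklace} do the work. You additionally spell out the inverse construction and check the Grassmann necklace axioms explicitly, details the paper leaves implicit; these checks are accurate and welcome, but not a different method.
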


    \begin{proof}
        Send a Grassmann necklace $(I_1, I_2, \dots, I_n)$ of a sparse paving positroid to the non-adjacent subset
    $\{i \in [n] : I_i \neq C_{k,n}^{(i)}\}$.
    \end{proof}

\section{Decorated Permutations of Sparse Paving Positroids}
\label{sec:decperm}

\begin{definition}[{Decorated Permutation~\cite[Definition 13.3]{Pos}}]
A \emph{decorated permutation} is a pair $(\pi, \phi)$ where $\pi$ is a permutation of $[n]$ and $\phi$ is a function from the set of fixed points $\{i \in [n] : \pi(i)=i\}$ to $\{-1,1\}$.
\end{definition}

Postnikov~\cite[Section 16]{Pos} has described how to obtain the decorated permutation corresponding to the same positroid from a Grassmann necklace.
Given a Grassmann necklace $(I_1, I_2, \dots, I_n)$ which defines a decorated permuation $(\pi, \phi)$ by
\begin{itemize}
    \item $\pi(i)=j$ if $I_{i+1} = (I_i \setminus \{i\}) \cup \{j\}$ and $j \neq i$,
    \item $\pi(i) = i$ and $\phi(i)=1$ if $I_i = I_{i+1}$ and $i \not\in I_i$,
    \item $\pi(i) = i$ and $\phi(i)=-1$ if $I_i = I_{i+1}$ and $i \in I_i$.
\end{itemize}
When $I_i = C_{n,k}^{(i)}$ for $1 \leq i \leq n$, then $\pi(i) = i+k$ for $1 \leq i \leq n$.
This corresponds to the uniform matroid $U_{k,n}$, and we will denote this decorated permutation by $\pi_{k,n}^{top}$.
For a particular example with $n = 6$ and $k=3$ we have $\pi = 456123$ in one-line notation.
We will explain that all decorated permutations of sparse paving positroids can be obtained from the decorated permutation of the uniform matroid by transposing adjacent elements.
As usual, the condition of adjacency is viewed cyclically, with the first and last elements of the permutation considered to be adjacent.
Sticking with $n=6$ and $k=3$ assume that $I_i = C_{k,n}^{(i)}$ for $i \neq 3$ with 
\[I_3 = (C^{(3)}_{3,6} \setminus \{5\}) \cup \{6\} = \{3,4,6\}\]
so we then obtain $\pi = 465123$.
Here we have swapped the second and third positions of $456123$ to obtain $465123$.

Define $\sigma_i$ to operate on permutations in one-line by exchanging their entries in positions $i$ and $i-1$.
Here $\sigma_1$ exchanges the first and last positions.
For any $A \subseteq [n]$ that is non-adjacent define
\[\sigma_A := \prod_{i \in A} \sigma_i\]
which is well-defined because the non-adjacent condition means the operators in the product commute.

\begin{theorem}
    For positive integers $n$ and $2 \leq k \leq n-2$, a decorated permutation $(\pi, \phi)$ corresponds to a sparse paving positroid if and only if $\pi = \sigma_A (\pi_{k,n}^{top})$ for some non-adjacent $A \subseteq [n]$.
    In this case, $\pi$ has no fixed points and so the function $\phi$ gives no information.
\end{theorem}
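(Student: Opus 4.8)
The plan is to transport the characterization of Theorem~\ref{thm:necklace} (equivalently, the bijection of Corollary~\ref{cor:nonadj}) across Postnikov's recipe that turns a Grassmann necklace into a decorated permutation. First I would take a sparse paving positroid of rank $k$, let $(I_1,\dots,I_n)$ be its Grassmann necklace, and set $A=\{i\in[n]:I_i\neq C_{k,n}^{(i)}\}$, which is non-adjacent by Theorem~\ref{thm:necklace}. I would then write out $I_i$ explicitly: it equals $\{i,i+1,\dots,i+k-1\}$ when $i\notin A$, and $(C_{k,n}^{(i)}\setminus\{i+k-1\})\cup\{i+k\}=\{i,i+1,\dots,i+k-2,i+k\}$ when $i\in A$. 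Since $k\geq 2$, in particular $i\in I_i$ for every $i$.

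Next I would compute $\pi(i)$ directly from the defining relation $I_{i+1}=(I_i\setminus\{i\})\cup\{\pi(i)\}$, splitting into the three cases permitted by non-adjacency: (a) $i\notin A$ and $i+1\notin A$, giving $\pi(i)=i+k$; (b) $i\in A$ (so $i+1\notin A$), giving $\pi(i)=i+k-1$; (c) $i+1\in A$ (so $i\notin A$), giving $\pi(i)=i+k+1$. On the other side, since $A$ is non-adjacent the transpositions $\sigma_i$ with $i\in A$ move entries in the pairwise disjoint position pairs $\{i-1,i\}$, so $\sigma_A$ acts on $\pi_{k,n}^{top}=[1+k,2+k,\dots,n+k]$ by independently swapping, for each $i\in A$, the entries in positions $i-1$ and $i$. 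Reading off position $j$ of $\sigma_A(\pi_{k,n}^{top})$ in these same three cases reproduces $j+k$, $j+k-1$, and $j+k+1$ respectively, so $\pi=\sigma_A(\pi_{k,n}^{top})$. Because $2\leq k\leq n-2$ we have $k\not\equiv 0,\pm 1\pmod n$, so $\pi$ is fixed-point free in every case; hence $\phi$ has empty domain and carries no information. (With $A=\emptyset$ this recovers $\pi_{k,n}^{top}$ itself.)

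For the converse, given any non-adjacent $A\subseteq[n]$, Corollary~\ref{cor:nonadj} furnishes a sparse paving positroid whose associated non-adjacent set is $A$; by the computation above, its decorated permutation is precisely $\sigma_A(\pi_{k,n}^{top})$ equipped with the empty decoration. Since decorated permutations are in bijection with positroids, any decorated permutation $(\pi,\phi)$ with $\pi=\sigma_A(\pi_{k,n}^{top})$---necessarily with $\phi$ trivial, as $\pi$ is fixed-point free---corresponds to that very positroid, which is sparse paving. This closes both directions.

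The bulk of the work, and essentially the only place to be careful, is the bookkeeping of the modular index shifts: one must track that $\sigma_i$ swaps \emph{positions} (not values) $i-1$ and $i$, that $\sigma_1$ wraps around cyclically, and that the non-adjacency of $A$ is exactly what makes $\sigma_A$ well defined while simultaneously collapsing the Grassmann necklace computation into the three listed cases. The one conceptual point to get right is that the converse should be argued by invoking the bijectivity of Postnikov's correspondence between decorated permutations and positroids, rather than by re-deriving a Grassmann necklace from $\pi$ directly.
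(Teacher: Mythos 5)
Your proposal is correct and takes essentially the same approach as the paper: compute $\pi(i)$ case by case from the Grassmann necklace characterized in Theorem~\ref{thm:necklace}, using non-adjacency of $A$ to reduce to the three local cases, match the result with $\sigma_A(\pi_{k,n}^{top})$, note fixed-point-freeness from $2 \leq k \leq n-2$, and obtain the converse from the bijection between decorated permutations and positroids. The only difference is that you carry out the index bookkeeping more explicitly than the paper does.
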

\begin{proof}
Take a Grassmann necklace $(I_1, I_2, \dots, I_n)$ satisfying the conditions in Theorem~\ref{thm:necklace} so that it is in bijection with the non-adjacent subset $A \subseteq [n]$ as in Corollary~\ref{cor:nonadj}.
Let $(\pi, \phi)$ be the decorated permutation associated to the same positroid as the Grassmann necklace.
Assume that $i \in A$ so that $I_{i-1} = C_{k,n}^{(i-1)}$, $I_i = \left(C_{k,n}^{(i)} \setminus \{i+k-1\}\right) \cup \{i+k\}$, and $C_{i+1} = C_{k,n}^{(i+1)}$.
This implies that $\pi(i-1) = i + k$ and $\pi(i) = i+k-1$.
The case that $i \not\in A$ and $i+1 \in A$ can be handled in the same way by shifting indices.

Now, assume $i \not\in A$ and $i+1 \not\in A$ so that $I_i = C_{k,n}^{(i)}$ and $I_{i+1} = C_{k,n}^{(i+1)}$.
This implies $\pi(i) = i+k$.
The equality $\pi = \sigma_A (\pi_{k,n}^{top})$ then follows.
Furthermore, since $2 \leq k \leq n-2$, we find that $\pi$ has no fixed points.
\end{proof}

\section{$\Le$-diagrams of Sparse Paving Positroids }
\label{sec:Le}
\begin{definition}[{$\Le$-diagram~\cite[Definition 6.1]{Pos}}]
For any partition $\lambda$, we consider its Young diagram in the convention that our diagrams are left justified with the largest part on top.
A \emph{$\Le$-diagram} $D$ is a filling of the Young diagram so that each box is either empty or contains a $\bullet$ such that for $i < i'$ and $j < j'$ when the box $(i',j')$ exists it contains a $\bullet$ whenever both the boxes $(i,j')$ and $(i',j)$ contain a $\bullet$.
We let $|D|$ denote the number of $\bullet$'s that $D$ contains, and we let $\Le(k,n)$ denote the collection of all $\Le$-diagrams that fit inside a $k \times (n-k)$ box.
\end{definition}

Given $D \in \Le(k,n)$, the southeast boundary of the shape, along with possibly the top and left sides of the box, gives us a lattice path from the top right to the bottom left of the box.
We will start in the upper right corner of the $k \times (n-k)$ box and label each step with an element from $[n]$ in order.
Vertical steps are sources, and horizontal steps are sinks.
Any $\bullet$ in $D$ is an internal vertex.
A directed edge is drawn from right to left from each source or internal vertex whenever there is an internal vertex to the right in the same row.
In addition, a directed edge is drawn from an internal vertex whenever there is an internal vertex or sink below in the same column.
We will denote this directed network obtained from $D$ by $N_D$.

A path $p$ in $N_D$ is a sequence of vertices $v_0, v_1, \dots, v_m$ so that $v_a \to v_{a+1}$ is a directed edge for each $0 \leq a < m$.
Here we say $p$ \emph{terminates} at $v_m$ and write $t(p) = v_m$.
Let $N_D$ have $I$ and $J$ of its set of sources and sinks, respectively.
So, $|I| = k$, $I \cup J = [n]$, and $I \cap J = \emptyset$.
A \emph{disjoint path system} is a collection $(p_i)_{i \in I}$ of directed paths in $N_D$ with $p_i$ starting at source $i$ such that:
\begin{enumerate}
    \item $t(p_i) \in j \in J$ is a sink or $t(p_i) = i$,
    \item and $p_{i}$ and $p_{i'}$ do not have any vertex in common for any $i, i' \in I$ with $i \neq i'$.
\end{enumerate}
We say the disjoint path system $(p_i)_{i \in I}$ \emph{realizes} the set $\{t(p_i) : i \in I\} \subseteq [n]$.
For any $D \in \Le(k,n)$, the positroid associated to $D$ is the matroid on $[n]$ that has a basis for each set that is realized by a disjoint path system (see \cite[Proposition 13]{Oh} and \cite[Theorem 6.5]{Pos}).

In this section, we will determine when a $\Le$-diagram corresponds to a sparse paving matroid. 
First, recall that the uniform matroid $U_{k,n}$ is a positroid and its $\Le$-diagram is the full $k \times (n-k)$ rectangle with a $\bullet$ in each possible position.
To begin we number $n$ cells along the boundary of the $k \times (n-k)$ rectangle. If $2 \leq k \leq n-1$, we do the following:
\begin{enumerate}
    \item[(i)] We number the cell in the lower right corner with a $1$.
    \item[(ii)] We number, from right to left, the cells in the top row with $2, 3, \dots n - k+1$.
    \item[(iii)] We number, from top to bottom, the cells in the left-most column with $n-k+1, n-k+2, \dots, n$.
\end{enumerate}









\begin{figure}[h]
     \begin{center}

         \begin{ytableau}
          7& 6 & 5 & 4 & 3 & 2\\
 8& & & & &\\
 9& & & & & \\
 10&& & & & 1
 \end{ytableau}

     \end{center}

     \caption{The numbering of the cells for $k = 4$ and $n = 10$.}
     \label{fig:le_numbering}
 \end{figure}

Given a $k \times (n-k)$ rectangle that is completely filled with $\bullet$'s, a numbered cell is the only place a $\bullet$ can be removed if we wish to still have $\Le$-diagram.
For $A\subseteq [n]$ we define $D_{k,n}(A) \in \Le(k,n)$ to be the $\Le$-diagram obtained from the completely filled $k \times (n-k)$ rectangle by removing the $\bullet$ in the cell numbered $i$ for each $i \in A$.
When $i=1$, we modify the diagram by removing the $\bullet$ and the cell in that position.
See Figure~\ref{fig:le_numbering} for an example of the cell numbering and Figure~\ref{fig:le_sparse_pave} for examples of the construction.

\begin{figure}[h]
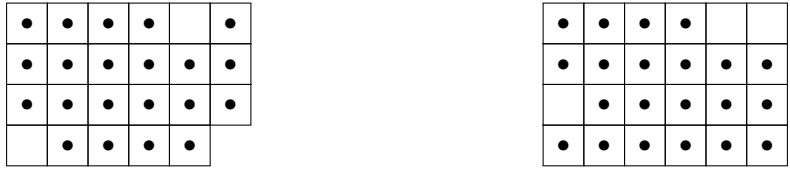

\begin{minipage}{.45\textwidth}
    \begin{center}
        \begin{ytableau}
\bullet& \bullet  & \bullet & \bullet &  & \bullet\\
\bullet&\bullet &\bullet &\bullet &\bullet &\bullet\\
\bullet& \bullet &\bullet &\bullet &\bullet &\bullet \\
&\bullet&\bullet&\bullet &\bullet  
\end{ytableau}
    \end{center}
\end{minipage}
\begin{minipage}{.45\textwidth}
    \begin{center}
        \begin{ytableau}
\bullet& \bullet  & \bullet & \bullet &  & \\
\bullet&\bullet &\bullet &\bullet &\bullet &\bullet\\
 & \bullet &\bullet &\bullet &\bullet &\bullet \\
\bullet&\bullet&\bullet&\bullet &\bullet  & \bullet
\end{ytableau}
    \end{center}
\end{minipage}

    \caption{On the left, we have the $\Le$-diagram $D_{4,10}(A)$, where $A=\{1,3,10\}$.
    On the right, we have $D_{4,10}(A)$ where $A=\{2,3,9\}$. }
    \label{fig:le_sparse_pave}
\end{figure}

\begin{proposition}
$D_{k,n}(A)$ is a $\Le$-diagram.
\end{proposition}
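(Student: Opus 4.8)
The plan is to verify the two clauses in the definition of a $\Le$-diagram directly: that $D_{k,n}(A)$ is supported on a genuine Young diagram, and that its filling satisfies the $\Le$ condition. Both come down to elementary bookkeeping about where the numbered cells sit inside the $k \times (n-k)$ rectangle.

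First I would settle the underlying shape. If $1 \notin A$, then $D_{k,n}(A)$ lives on the full $k \times (n-k)$ rectangle, the Young diagram of the partition $(n-k,\dots,n-k)$ with $k$ parts. If $1 \in A$, the construction removes the cell in the lower-right corner, namely the box $(k,n-k)$, so the shape becomes the Young diagram of $(n-k,\dots,n-k,n-k-1)$, whose parts are still weakly decreasing; either way we have a legitimate Young diagram, and hence a filling to analyze.

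Next I would locate the empty boxes. By construction, the only boxes of the ambient rectangle from which a $\bullet$ is ever deleted are the numbered ones, and reading off items (ii) and (iii) of the numbering, every numbered cell other than cell $1$ lies in the top row or in the leftmost column. Cell $1$ is the southeast corner, and in $D_{k,n}(A)$ it is either left alone (when $1 \notin A$) or excised entirely along with its box (when $1 \in A$); in neither case does it remain as an empty box. Hence every empty box of $D_{k,n}(A)$ lies in the first row or the first column of its shape.

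Finally I would check the $\Le$ condition. Let $(i',j')$ be an empty box. If it lies in the first row, then $i'=1$, so there is no index $i < i'$ and the defining implication---for all $i<i'$ and $j<j'$, if $(i,j')$ and $(i',j)$ both contain $\bullet$'s then so does $(i',j')$---holds vacuously; symmetrically, if $(i',j')$ lies in the first column, then $j'=1$, there is no $j<j'$, and again the implication is vacuous. As these are the only two possibilities, the $\Le$ condition holds at every empty box, so $D_{k,n}(A)$ is a $\Le$-diagram. I do not expect a real obstacle here; the only point needing a moment's care is the case split at cell $1$, where a box is deleted rather than merely emptied, and checking that this leaves the shape a valid Young diagram.
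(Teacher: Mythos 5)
Your proof is correct and follows essentially the same approach as the paper's, which simply notes that bullets may only be removed from the numbered cells (all in the first row or first column, or the deleted corner box) so no $\Le$-violation can arise. You spell out in detail the vacuousness of the $\Le$ condition at empty boxes in the first row or column and the Young-diagram check when the corner box is deleted, which the paper leaves implicit.
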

\begin{proof}
    By construction, $D_{k,n}(A)$ is a $\Le$-diagram because of where we are allowed to remove $\bullet$'s. When $1 \in A$ we remain a $\Le$-diagram because we also remove the lower right cell.
\end{proof}

\begin{lemma}
For any $i \in [n]$, the positroid associated to $D_{k,n}(A)$ has $C_{k,n}^{(i)}$ as a basis if and only if $i \not \in A$.
\label{lem:missing_basis}
\end{lemma}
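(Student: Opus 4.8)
The plan is to understand the network $N_{D_{k,n}(A)}$ explicitly and then identify exactly which disjoint path systems realize the cyclic interval $C_{k,n}^{(i)}$. First I would set up the labeling of sources and sinks along the southeast boundary of the (nearly full) rectangle. Since $D_{k,n}(A)$ is obtained from the completely filled $k\times(n-k)$ rectangle by deleting dots only at the specially numbered boundary cells, the underlying network is a small perturbation of the network $N$ for the uniform matroid $U_{k,n}$, whose disjoint path systems realize \emph{all} $k$-subsets. I would recall (or re-derive from Postnikov's construction) that in the full-rectangle network the source labels are the vertical steps and the sink labels are the horizontal steps, and that the cyclic interval $C_{k,n}^{(i)}=\{i,i+1,\dots,i+k-1\}$ is realized by the ``straight-across/straight-down'' path system in which each source routes monotonically to the appropriate sink with no crossings. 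The key observation is that this canonical path system for $C_{k,n}^{(i)}$ uses exactly the dot in the cell numbered $i$ (for $i\neq 1$; and for $i=1$ it is the unique path system not passing through any interior dot, matching the removed corner cell).

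Concretely, the steps I would carry out are: (1) describe $N_{D_{k,n}(A)}$, noting that removing the dot at cell $i$ only severs the local horizontal-then-vertical turn at that position; (2) show that for each $i\in[n]$ there is a \emph{unique} disjoint path system realizing $C_{k,n}^{(i)}$ in the full network, and that this system necessarily routes a path through the cell numbered $i$ (this is the crux — I would argue it by the planarity/non-crossing structure of lattice-path networks: the cyclic interval forces every source below $i$'s ``diagonal'' to travel the maximal distance, which pins down the route and forces passage through cell $i$); (3) conclude that if $i\notin A$, the dot at cell $i$ is still present, so that path system survives and $C_{k,n}^{(i)}$ remains a basis; (4) conversely, if $i\in A$, the dot at cell $i$ is gone, so the unique path system realizing $C_{k,n}^{(i)}$ is destroyed, and by uniqueness no other system realizes it, hence $C_{k,n}^{(i)}$ is a non-basis. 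The $i=1$ corner case is handled separately: removing the corner cell corresponds to deleting the path system that avoids all interior dots, which is precisely the one realizing $C_{k,n}^{(1)}=\{1,2,\dots,k\}$.

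The main obstacle I expect is step (2): proving the uniqueness of the path system realizing a given cyclic interval and pinning down exactly which interior cell it occupies. This requires a careful analysis of Postnikov's edge-orientation rules (edges go right-to-left along rows and top-to-bottom along columns), combined with the source/sink labeling induced by the boundary walk, to show that among all $k$-subsets the cyclic intervals $C_{k,n}^{(i)}$ are the ``extremal'' ones with a rigid routing. I would likely organize this by induction on $k$ or by directly exhibiting the path system and then showing any deviation either creates a crossing or changes the terminal set. Once uniqueness and the cell-identification are established, the if-and-only-if follows immediately, and this lemma then feeds directly into identifying the circuit-hyperplanes of $D_{k,n}(A)$ in the same way Theorem~\ref{thm:necklace} does on the Grassmann-necklace side.
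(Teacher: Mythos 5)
Your overall strategy is the same as the paper's: analyze disjoint path systems in the nearly full rectangle, show that realizing the cyclic interval $C_{k,n}^{(i)}$ forces a path to turn at the boundary cell numbered $i$, and treat $i=1$ separately. The step that does not work as you have formulated it is the necessity direction in (2)--(4): uniqueness of the realizing path system \emph{in the full network} does not by itself rule out a realizing system in $N_{D_{k,n}(A)}$, because $N_{D_{k,n}(A)}$ is not a subnetwork of the full network. Deleting a dot reroutes the horizontal and vertical edges so that they skip the missing position, and when $1\in A$ the shape itself changes, so the boundary labels shift and the sources become $[k-1]\cup\{k+1\}$ rather than $[k]$; in that situation path systems in $N_{D_{k,n}(A)}$ cannot be compared directly with path systems in the full network at all. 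The paper sidesteps this by running the forcing argument directly inside $N_{D_{k,n}(A)}$: for $2\le i\le n-k+1$, any system realizing $C_{k,n}^{(i)}$ must route the source $1$ to the leftmost needed sink $i+k-1$, the paths from the other sources outside $C_{k,n}^{(i)}$ are forced into a nested staircase that blocks any earlier descent, and hence the path from $1$ must turn down in row $1$ exactly above that sink, which is the cell numbered $i$; the wrap-around range $n-k+2\le i\le n$ is handled identically with the source $k-n+i$ playing the role of source $1$ and the sink $n$ playing the role of $i+k-1$.

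Two further points your sketch glosses over. The sufficiency direction must account for the \emph{other} elements of $A$: the canonical system can pass positions whose dots were removed (left-column cells, or the corner when $1\in A$), and it survives only because edges skip removed dots, the rows of the other sources are untouched, and, when $1\in A$, because of the relabeled sources and sinks (this is exactly the bookkeeping in the paper's footnotes about $s_m$ and $t_1$). Finally, for $i=1$ the correct mechanism is this label shift, not the deletion of a path system: when $1\in A$ the empty path system still exists, but it now realizes $[k-1]\cup\{k+1\}$ rather than $[k]$, and $[k]$ fails to be a basis because a path from the source $k+1$ to the sink $k$ is impossible.
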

\begin{proof}
First, consider $i=1$.
If $1 \not\in A$, then $C_{k,n}^{(1)}$ is a basis as it is the set of sources.
If $1 \in A$, then $C_{k,n}^{(1)}$ is not a basis as the set of sources is $[k-1] \cup \{k+1\}$ and thus $C_{k,n}^{(1)} = [k]$ cannot be realized with a disjoint path system as it would require a path from the source $k+1$ to $k$ which cannot exist.

Next, consider $2 \leq i \leq n-k+1$ where we have $C_{k,n}^{(i)} = \{i, i+1, \dots, i+k-1\}$.
In particular $1 \not \in C_{k,n}^{(i)}$.
So, any disjoint path system realizing $C_{k,n}^{(i)}$ as a basis would require a path from the source $1$ to the leftmost sink $i+k-1$, the largest number in $C_{k,n}^{(i)}$. In the case where $i\in A$, we claim no disjoint path system witnessing $C_{k,n}^{(i)}$ can exist. In this case, the cell in row $1$ above sink $i+k-1$ is empty. 
Now, let $s_1 < s_2 < \cdots < s_m$ be the sources\footnote{We know that $s_1 = 1$. Also, when $i \leq k$ we have $m = i-1$ and $s_m = i-1$. When $i$ becomes larger, if $1 \not\in A$ we will have $s_m=k$ because in this case $k$ is a source. Otherwise, $s_m = k+1$ when $1 \in A$.} not in $ C_{k,n}^{(i)}$.
Also, let $t_1 < t_2 < \cdots < t_m$ be the sinks\footnote{For $j \geq 2$, $t_j = k+j$, while $t_1 = k$ or $t_1 = k+1$ depending whether $1 \in A$ or not.} in $C_{k,n}^{(i)}$.
So, there would need to be disjoint paths from sources $s_1, s_2, \dots, s_m$  to sinks $k+1,k+2, \dots, k+m$.
This is only possible if the path from $s_m$ to $t_1$ turns above $t_1$, the path from $s_{m-1}$ to $t_2$ turns above $t_2$, and so on, until the path from $1=s_1$ to $t_m$ turns above $t_m=i+k-1$. 
See Figure \ref{fig:pldc_proof}.

If $i \not \in A$, then the aforementioned path from $1$ to $i+k-1$ exists since the cell in row $1$ above sink $i+k-1$ has a $\bullet$ in it.
Furthermore, by the definition of $D_{n,k}(A)$, nothing has been modified in the rows of any source other than $1$.
Thus, a disjoint path system can exist, making $C_{k,n}^{(i)}$ a basis.

Lastly, consider $n-k+2 \leq i \leq n$.
In this case $C_{k,n}^{(i)} = \{i, i+1, \cdots, n\} \cup \{1, 2, \cdots, k - n + i - 1\}$.
This means that in any disjoint path system that realizes $C_{k,n}^{(i)}$ as a basis, the smallest labeled source not in $C_{k,n}^{(i)}$ must be connected to the sink $n$.
Here, we can argue the same as previously, with the source $k-n+i$ playing the same role the source $1$ did earlier.
So, if $i \in A$, then this path is impossible and $C_{k,n}^{(i)}$ is not a basis.
However, if $i \not\in A$, such a path is possible and $C_{k,n}^{(i)}$ is a basis, as all other paths needed for the disjoint path system use portions of $D_{k,n}(A)$ that have not been modified from the completely filled case.
\end{proof}

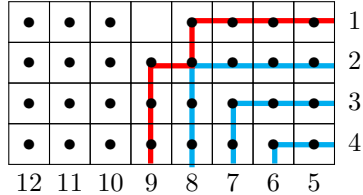
\begin{figure}[h]
\begin{tikzpicture}
\draw[line width=.2em,cyan] (1.9,0.5)--(0,0.5)--(0,-.85);
\draw[line width=.2em,cyan] (1.9,0.0)--(.55,0.0)--(.55,-.8);
\draw[line width=.2em,cyan] (1.9,-0.55)--(1.1,-0.55)--(1.1,-.8);
\draw[line width=.2em,red] (1.9,1.1)--(0,1.1)--(0,0.5)--(-.55,0.5)--(-.55,-.8);
\node{\begin{ytableau}
\bullet& \bullet& \bullet &   & \bullet & \bullet  & \bullet & \bullet &\none[1]\\
\bullet & \bullet& \bullet&\bullet &\bullet &\bullet &\bullet &\bullet&\none[2]\\
\bullet& \bullet& \bullet & \bullet & \bullet &\bullet &\bullet &\bullet &\none[3]\\
\bullet &\bullet  & \bullet  &\bullet  &\bullet&\bullet &\bullet &\bullet &\none[4]\\
\none[12] & \none[11] & \none[10] & \none[9] & \none[8]&\none[7] & \none[6] & \none[5]
\end{ytableau}};
\end{tikzpicture}
    \caption{The $\Le$-diagram $D_{4,12}(\{6\})$ and an illustration that $C_{4,12}^{(6)}=\{6,7,8,9\}$ is not a basis due to the path from $1$ to $9$ colliding with another path above sink $8$.} \label{fig:pldc_proof}
\end{figure}

\begin{theorem}
    For $2 \leq k \leq n-2$, the positroid associated to the $\Le$-diagram $D_{k,n}(A)$ is sparse paving if and only if $A$ is non-adjacent.
        Moreover, the sparse paving positroid obtained has circuit-hyperplanes $\{C_{k,n}^{(i)} : i \in A\}$.
\end{theorem}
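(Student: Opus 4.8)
The plan is to avoid touching disjoint path systems again, and instead combine Lemma~\ref{lem:missing_basis} with the Grassmann necklace characterization in Theorem~\ref{thm:necklace}.

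For the ``only if'' direction I would argue by contrapositive. If $A$ fails to be non-adjacent, pick indices $i,i+1\in A$ (mod $n$). By Lemma~\ref{lem:missing_basis}, neither $C_{k,n}^{(i)}$ nor $C_{k,n}^{(i+1)}$ is a basis of $M:=M(D_{k,n}(A))$, and since $2\le k\le n-2$ these two cyclic intervals are distinct with $C_{k,n}^{(i)}\triangle C_{k,n}^{(i+1)}=\{i,i+k\}$ of size $2$. Characterization (2) of sparse paving matroids from Section~\ref{sec:prelim} (applied with $\mathcal{CH}=\binom{[n]}{k}\setminus\mathcal{B}(M)$) then shows that $M$ is not sparse paving.

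For the ``if'' direction, suppose $A$ is non-adjacent. Since $M$ is a positroid it has a Grassmann necklace $(I_1,\dots,I_n)$, with $I_i$ the $<_i$-minimal basis. Because $C_{k,n}^{(i)}$ is the $<_i$-minimal element of $\binom{[n]}{k}$, Lemma~\ref{lem:missing_basis} gives $I_i=C_{k,n}^{(i)}$ precisely when $i\notin A$, and $I_i\neq C_{k,n}^{(i)}$ precisely when $i\in A$. The key step is then to pin down $I_i$ for $i\in A$: non-adjacency forces $i\pm 1\notin A$, so $I_{i-1}=C_{k,n}^{(i-1)}=\{i-1,i,\dots,i+k-2\}$ and $I_{i+1}=C_{k,n}^{(i+1)}=\{i+1,\dots,i+k\}$ are known. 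Applying the Grassmann necklace transition first from $I_{i-1}$ (whose least element $i-1$ lies in $I_{i-1}$) and then from $I_i$ (which contains $i$, using $k\ge 2$), and matching the outcome against $I_{i+1}$, I expect to be forced into $I_i=(C_{k,n}^{(i)}\setminus\{i+k-1\})\cup\{i+k\}$, the alternative $I_i=C_{k,n}^{(i)}$ being excluded since $i\in A$. This is exactly the hypothesis of Theorem~\ref{thm:necklace} for the set $A=\{i:I_i\neq C_{k,n}^{(i)}\}$, so that theorem immediately yields that $M$ is sparse paving with circuit-hyperplanes $\{C_{k,n}^{(i)}:i\in A\}$, which is also the ``moreover'' claim.

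The main obstacle I anticipate is just the bookkeeping in that last necklace computation---chasing the two necklace axioms around the cycle with all arithmetic taken modulo $n$---and it is there that both $k\ge 2$ (to know $i\in I_i$) and $k\le n-2$ (to know that $i+k-1$ and $i+k$ are the genuinely new elements appearing in $I_{i+1}$) get used; these are precisely the hypotheses of the statement. A longer, more self-contained alternative would bypass Theorem~\ref{thm:necklace} and instead construct an explicit disjoint path system in $N_{D_{k,n}(A)}$ realizing every $k$-subset other than the $C_{k,n}^{(i)}$ with $i\in A$, and then verify the bound $|C_{k,n}^{(i)}\triangle C_{k,n}^{(j)}|\ge 4$ for distinct $i,j\in A$ directly from non-adjacency; but routing through the already-established necklace characterization reuses Lemma~\ref{lem:missing_basis} with far less effort.
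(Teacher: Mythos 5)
Your proposal is correct, and it closes the argument by a genuinely different route than the paper. Both proofs rest on Lemma~\ref{lem:missing_basis} and the necklace characterization, but the paper finishes the ``if'' direction indirectly: it notes that removing $\bullet$'s only destroys bases and that the diagrams $D_{k,n}(A)$ with $A$ non-adjacent are in bijection with non-adjacent subsets, hence (via Theorem~\ref{thm:necklace} and Corollary~\ref{cor:nonadj}) with sparse paving positroids; it never computes the Grassmann necklace of the positroid associated to $D_{k,n}(A)$. You instead pin that necklace down explicitly, and the chase you were tentative about does go through: with $I_{i-1}=C_{k,n}^{(i-1)}$, the transition at $i-1$ forces $I_i=\{i,\dots,i+k-2\}\cup\{j\}$ for some $j$ outside that interval; the transition at $i$ (legal since $k\ge 2$ gives $i\in I_i$) together with $I_{i+1}=C_{k,n}^{(i+1)}$ forces $\{j,j'\}=\{i+k-1,i+k\}$, and $j\ne i+k-1$ because $i\in A$, so $j=i+k$, with $k\le n-2$ keeping all these residues distinct modulo $n$. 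What each approach buys: yours is more self-contained and arguably tighter, since it uses only the cyclic-interval information from Lemma~\ref{lem:missing_basis}, never has to reason about which non-interval $k$-sets are bases, and the ``moreover'' clause drops out of Theorem~\ref{thm:necklace} immediately, at the cost of the modular bookkeeping; the paper's argument is shorter but leaves the identification of the positroid of $D_{k,n}(A)$ with the sparse paving positroid whose circuit-hyperplanes are $\{C_{k,n}^{(i)}:i\in A\}$ comparatively implicit. Your ``only if'' direction (two adjacent cyclic intervals as non-bases with symmetric difference $2$) is the same reasoning used in the proof of Theorem~\ref{thm:necklace} and is fine.
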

\begin{proof}
    Given the positroid associated to $D_{k,n}(A)$ for some $A$, by Lemma~\ref{lem:missing_basis} we know that $C_{k,n}^{(i)}$ is not a basis if and only if $i \in A$.
    Also, by Theorem~\ref{thm:necklace} and Corollary~\ref {cor:nonadj}, we know the sparse paving positroids are exactly characterized by having their set of circuit hyperplanes consist of exactly $C_{k,n}^{(i)}$ for $i \in A$ such that $A \subseteq [n]$ is non-adjacent.
    Removing $\bullet$'s from a $\Le$-diagram can only decrease the number of bases of the corresponding positroid.
    The theorem then follows because the $\Le$-diagrams described in this theorem are in bijection with non-adjacent $A \subseteq [n]$.
\end{proof}

\section{Counting Sparse Paving Positroids}
\label{sec:enum}

Counting sparse paving positroids of rank $k=1$, or dually $k = n-1$, is trivial.
In this case, we have $n+1$ sparse paving positroids.
A sparse paving positroid with $k=1$ is either the uniform matroid $U_{1,n}$ or a rank $1$ matroid where exactly one element of $[n]$ is chosen to be a loop.

By Corollary~\ref{cor:nonadj}, we know that sparse paving positroids on $[n]$ for any rank $2 \leq k \leq n-2$ are in bijection with non-adjacent subsets of $[n]$.
Let $s_n$ denote the number of non-adjacent subsets of $[n]$ for each $n \geq 0$.
The number of sparse paving positroids is independent of the rank $k$ whenever $2 \leq k \leq n-2$, in which case we need $n \geq 4$.

\begin{proposition}
The number of non-adjacent subsets of $[n]$ satisfies the recurrence $s_n = s_{n-1} + s_{n-2}$ for $n \geq 4$ with $s_0 = 1$, $s_1 = 2$, $s_2 = 3$, and $s_3 = 4$.
    \label{prop:sn}
\end{proposition}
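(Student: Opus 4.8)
The plan is to handle the initial values $s_0,\dots,s_3$ by inspection and then prove the recurrence for $n\ge 4$ by reducing the cyclic (``wrap-around'') counting problem to the elementary linear one. For the base cases, listing the non-adjacent subsets of $[n]$ directly for $n\le 3$ gives $s_0=1$, $s_1=2$, $s_2=3$, $s_3=4$ (for $n\le 3$ the only non-adjacent sets are the empty set and the singletons, with $n=1$ a degenerate case); it is convenient to also record $s_4=7$ at this stage.

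For the recurrence, I would introduce auxiliary numbers $f_m$, the number of subsets of $[m]$ containing no two \emph{consecutive} integers $i,i+1$ (with no wrap-around). Conditioning on whether $m$ lies in the subset immediately gives $f_m=f_{m-1}+f_{m-2}$ for $m\ge 2$, with $f_0=1$ and $f_1=2$. The main step is then the identity $s_n=f_{n-1}+f_{n-3}$ for $n\ge 3$, which I would obtain by conditioning on whether $n\in A$ for a non-adjacent $A\subseteq[n]$: if $n\notin A$ the cyclic conditions at $n$ are vacuous and $A$ is exactly a subset of $[n-1]$ with no two consecutive elements, giving $f_{n-1}$ choices; if $n\in A$ then both cyclic neighbours $1$ and $n-1$ are excluded, so $A\setminus\{n\}$ is a subset of $\{2,\dots,n-2\}$ with no two consecutive elements, giving $f_{n-3}$ choices. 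Summing the two cases yields the identity.

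The recurrence for $n\ge 5$ then follows by expanding
\[
s_n=f_{n-1}+f_{n-3}=(f_{n-2}+f_{n-3})+(f_{n-4}+f_{n-5})=(f_{n-2}+f_{n-4})+(f_{n-3}+f_{n-5})=s_{n-1}+s_{n-2},
\]
first applying the $f$-recurrence and then the identity at $n-1$ and $n-2$; the remaining case $n=4$ is the direct check $s_4=f_3+f_1=5+2=7=s_3+s_2$. The one delicate point — and the place I expect to have to argue carefully — is the ``seam'': one must verify that once $n$ is forbidden or removed the surviving constraints really are linear, with no hidden wrap-around pair still to be forbidden, and one must keep track of exactly which index ranges make the identity $s_n=f_{n-1}+f_{n-3}$ and the $f$-recurrence simultaneously valid. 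Beyond that, the argument is a routine computation.
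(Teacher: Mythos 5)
Your proof is correct, but it takes a genuinely different route from the paper's. The paper proves the recurrence directly at the level of the cyclic objects: it splits a non-adjacent $A\subseteq[n]$ into three cases ($n\notin A$ with at most one of $1,n-1$ in $A$; both $1,n-1\in A$; and $n\in A$) and gives explicit bijections of the first case with non-adjacent subsets of $[n-1]$ and of the latter two cases jointly with non-adjacent subsets of $[n-2]$, so $s_n=s_{n-1}+s_{n-2}$ falls out with no auxiliary sequence. You instead pass through the linear problem: you introduce the Fibonacci-type counts $f_m$ of subsets of $[m]$ with no two consecutive elements, prove the identity $s_n=f_{n-1}+f_{n-3}$ by conditioning on whether $n\in A$, and then obtain the recurrence for $n\ge 5$ by algebraic manipulation of the $f$-recurrence, checking $n=4$ by hand. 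Your ``seam'' concern is the right one and it does go through: for $n\ge 3$, once $n$ is excluded or removed, the only cyclic adjacency that is not already a linear one involves $n$, so the surviving constraints are genuinely linear, and your bookkeeping of index ranges (identity valid for $n\ge 3$, $f$-recurrence applied at indices $n-1$ and $n-3$, hence the separate check at $n=4$) is accurate. What your approach buys is the explicit identity $s_n=f_{n-1}+f_{n-3}$, which immediately exhibits $s_n$ as a Lucas number and thus anticipates the remark at the end of Section~\ref{sec:enum}; what the paper's approach buys is a self-contained bijective proof of the recurrence among the $s_n$ themselves, with no auxiliary sequence and no algebraic step. The only cosmetic caveat is your parenthetical description of the $n\le 3$ cases (with $n=1$ called degenerate); since the initial values are fixed by the statement and only $s_2,s_3$ feed the recurrence, this does not affect the argument.
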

\begin{proof}
The only non-adjacent subset for $n=0$ is $\{\}$.
The non-adjacent subsets for $n=1$ are $\{\}$ and $\{1\}$.
When $n=2$ the non-adjacent subsets are $\{\}$, $\{1\}$, and $\{2\}$.
For $n=3$ the non-adjacent subsets are $\{\}$, $\{1\}$, $\{2\}$, and $\{3\}$.
This handles the initial conditions.

Now assume $n \geq 4$. Consider an arbitrary non-adjacent subset $A \subseteq [n]$. There are three cases. 
\begin{enumerate}
    \item Suppose $n$ is not in $A$ and at most one of $1$ or $n-1$ is in $A$. These sets are in bijection with non-adjacent subsets of $[n-1]$.
    \item Suppose both $1$ and $n-1$ are in $A$ (and thus $n$ is not in $A$). These are in bijection with non-adjacent subsets of $[n-2]$ which contain $1$, achieved by adding the element $n-1$ to the set. Since $n \geq 4$, we know that $n-1 \geq 3$ and is not adjacent to $1$.
    \item Suppose $n$ is in $A$ (and thus neither $n-1$ nor $1$ is in $A$). These are in bijection with non-adjacent subsets of $[n-2]$ which do \textit{not} contain $1$, achieved by adding the element $n$ to the set.
\end{enumerate}

These three cases are disjoint and make up every possible non-adjacent subset of $[n]$. Moreover, case (1) is counted by $s_{n-1}$, while cases (2) and (3) together are counted by $s_{n-2}$. 
\end{proof}

One can find the sequence $(s_n)_{n \geq 0}$ in the Online Encyclopedia of Integer Sequences~\cite [A169985]{oeis}.
From this, we see that we can also compute $s_n$ as the nearest integer to $\phi^n$, where $\phi=\frac{1+\sqrt{5}}{2}$ is the golden ratio. 
If we consider the \emph{Lucas numbers} defined by $L_0 = 2$, $L_1 = 1$, and $L_n = L_{n-1} + L_{n-2}$ for $n\geq 2$, then we have that $s_n = L_n$ for $n \geq 2$.

\bibliographystyle{alphaurl}
\bibliography{sample}

\end{document}